\newtheorem{lemma}{Lemma}[section]
\newtheorem{thm}[lemma]{Theorem}
\newtheorem{prop}[lemma]{Proposition}
\newcommand{\arr}[2]{\begin{array}{#1}#2\end{array}}
\newcommand{\matr}[2]{\left(\begin{array}{#1}#2\end{array}\right)}
\begin{document}

\title{On  McKay Quiver and Covering Spaces}
\author[Guo]{Jin Yun Guo\\
Department of Mathematics\\ Xiangtan University\\ Xiangtan,  CHINA}
\thanks{This work is partly supported by
Natural Science Foundation of China \#10671061, SRFDP\#200505042004}
\email{ gjy@xtu.edu.cn} \dedicatory{}

\subjclass{Primary {16G20, 20C05}; Secondary{ 16S34,16P90}}

\keywords{McKay quiver; covering space; Nakayama translation; finite group}

\date{}

\begin{abstract}
In this paper,  we study the relationship between the McKay quivers of a finite
subgroups $G$ of special linear groups general linear groups, via some natural
extension and embedding.  We show that the McKay quiver of certain extension of
a finite subgroup $G$ of $\mathrm{SL}(m,\mathbb C)$ in  $\mathrm{GL}(m,\mathbb C)$  is a
regular covering of the McKay quiver of $G$, and when embedding $G$ in a
canonical way into  $\mathrm{GL}(m-1,\mathbb C)$, the new McKay quiver is obtained by
adding an arrow from the Nakayama translation of $i$ back to $i$ for each $i$.
We also show that certain interesting examples of McKay quivers are obtained in
these two ways.
\end{abstract}

\maketitle

%\section{Introduction}

In 1980,  John McKay introduced McKay quiver for a finite subgroup of the
general linear group \cite{mc}. McKay observes that when $G$ is a subgroup of
$\mathrm{SL}(2, \mathbb C)$,  then its McKay quiver $Q = Q_G $ is a double quiver of
the affine Dynkin diagram of type $A, D, E$ \cite{mc}. McKay observed that the
McKay quivers describe the relationship between these groups and the Kleinian
singularities.

McKay quiver has bridged many mathematical fields such as algebraic geometry,
mathematics physics and representation theory (See,  for example, \cite{reid}).
In representation theory of algebra, for example, it appears in the study of
the Auslander-Reiten quiver of Cohen-Macaulay modules \cite{a, ar},
preprojective algebras of tame hereditary algebras \cite{cb, cb1, dr, bgl} and
quiver varieties \cite{na},  etc. We find that it can also play a critical role
in classification of selfinjective Koszul algebras of complexity 2 \cite{gyz}.

Covering space is an important tool of the algebraic topology, it is
introduced in the study of the representation theory of algebra  by
Bongartz and Gabriel in \cite{gr} and plays an important role
here\cite{ds, px, kr}. In this paper, we show that the covering maps
also appear naturally in the McKay quivers. We show that when one
extend a finite subgroup of $\mathrm{SL}(m,\mathbb C)$ via some nice cyclic
group to a finite subgroup of $\mathrm{GL}(m,\mathbb C)$, one gets natural
covering for their McKay quivers. There are also covering maps
between such extensions. So one can even construct the 'universal
cover' of an McKay quiver.

Such coverings are not only geometric for the McKay quivers. They
also induce  coverings in the categories of the projective-injective
modules of the corresponding skew group algebras over the  exterior
algebra of the given vector spaces. So their representation theory
are related. Using the universal covering, one get an "universal
algebra" which has simply connected quiver. This can be used to
explain our early results for the case of $m=2$ \cite{gmt}.

Though McKay quivers are well known for $\mathrm{SL}(2, \mathbb C)$.  It is difficult
to determine the McKay quiver in general.  We describe the McKay quiver for
certain  finite subgroup of general linear group. Let $G$ be a finite subgroup
of the general linear group, then we have that $N= G\cap \mathrm{SL}(m, \mathbb C)$ is
a normal subgroup of $G$ and $\bar{G} = G/N$ is a finite cyclic group. We
observe that under certain condition on $G$,  the McKay quiver of $G$ is a
covering quiver of the McKay quiver of $N$ with the group $\bar{G} $. We also
show by example, how the "physics" quivers appearing in the study of D-branes
\cite{gj} can be explained using our results.

\section{Finite Subgroups in $\mathrm{GL}(m, \mathbb
C) $ and $\mathrm{SL}(m, \mathbb C) $}

Let $V$ be an $m$-dimensional vector space over $\mathbb C$ and let
$G$ be a finite subgroup of $\mathrm{GL}(m, \mathbb C)= \mathrm{GL}(V)$. Let $Q_G =
(Q_{G,0}, Q_{G,1})$ be the McKay of $G$.  $V$ is naturally a
faithful representation of $G$. Let $\{S_i | i =1,  2, \ldots, n\}$
be a complete set of irreducible representations of $G$ over
${\mathbb C}$.  For each $S_i$, decompose the tensor product $V
\otimes S_i$ as a direct sum of irreducible  representations, write
$$V \otimes S_i = \bigoplus_j a_{i, j} S_j,  \quad\mbox{  } \quad i = 1,  \ldots,  n, $$
here $a_{i, j} S_j$ denotes a direct sum of $a_{i, j}$ copies of
$S_j$. $a_{i, j}$ is finite since $V$ is finite dimensional.  The
McKay quiver $Q=Q_G$ of $G$ is defined as follow.  The vertex set
$Q_{G,0} $ is the set of  the isomorphism classes of the irreducible
representations of $G$,  and there are $a_{i, j}$ arrows from the
vertex $i$ to the vertex $j$.

%Let $V$ be a representation of $G$ and $N$ be a normal subgroup of $G$, we say that $G$ acts uniformly related to $N$ if for any $g \in G$ and $v \in V$, there is a $c \in \mathbb C$, such that for any $h \in N$ and $v \in V$, we have that $(gh)v = c (hv)$.

The need the following lemma.

 \begin{lemma}\label{cyc}
 Let $G$ be a finite subgroup of $\mathrm{GL}(m, \mathbb
C)$ and let $ N= G\cap \mathrm{SL}(m, \mathbb C)$.  Then $G/N$ is a cyclic
group.  \end{lemma}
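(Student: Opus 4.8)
The plan is to realize $G/N$ explicitly as a finite subgroup of the multiplicative group $\mathbb{C}^{\times}$ via the determinant, and then invoke the classical fact that every finite subgroup of $\mathbb{C}^{\times}$ is cyclic. Concretely, I would first observe that the determinant $\det \colon \mathrm{GL}(m,\mathbb{C}) \to \mathbb{C}^{\times}$ is a group homomorphism, so its restriction to $G$ is a group homomorphism $\det|_G \colon G \to \mathbb{C}^{\times}$. By definition, $\ker(\det|_G) = \{g \in G \mid \det g = 1\} = G \cap \mathrm{SL}(m,\mathbb{C}) = N$; in particular this also re-proves that $N$ is normal in $G$, as already noted in the introduction.

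Next, by the first isomorphism theorem, $G/N \cong \mathrm{im}(\det|_G) = \det(G)$, which is a subgroup of $\mathbb{C}^{\times}$. Since $G$ is finite, $\det(G)$ is a finite subgroup of $\mathbb{C}^{\times}$, hence every element of $\det(G)$ is a root of unity. It then remains to recall that a finite subgroup $H$ of $\mathbb{C}^{\times}$ is necessarily cyclic: writing $N = |H|$, every element of $H$ satisfies $x^{N} = 1$, and since the polynomial $x^{N}-1$ has at most $N$ roots in the field $\mathbb{C}$, the group $H$ consists exactly of the $N$-th roots of unity, which form the cyclic group generated by $e^{2\pi i/N}$. (Alternatively, one may cite the standard result that any finite subgroup of the multiplicative group of a field is cyclic.) Combining, $G/N \cong \det(G)$ is cyclic.

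There is no real obstacle here; the only point requiring more than the homomorphism theorems is the cyclicity of finite multiplicative subgroups of a field, and for $\mathbb{C}$ this is immediate from the count of roots of $x^{N}-1$. I would present the argument in the order above, phrasing the determinant map and its kernel first, then the isomorphism $G/N \cong \det(G)$, and finally the cyclicity statement.
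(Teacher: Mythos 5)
Your proof is correct, and it begins the same way as the paper's: both arguments apply the determinant homomorphism $\det\colon G \to \mathbb{C}^{\times}$, identify $N$ as its kernel, and conclude that $G/N$ is (isomorphic to) a finite subgroup of $\mathbb{C}^{\times}$. Where you diverge is in how cyclicity of that subgroup is established. You invoke the classical fact that a finite subgroup of the multiplicative group of a field is cyclic, proved cleanly by noting that all $|H|$ elements satisfy $x^{|H|}=1$ and that this polynomial has at most $|H|$ roots, so $H$ is exactly the group of $|H|$-th roots of unity. The paper instead first applies the structure theorem for finite abelian groups to write $G/N$ as a direct sum of cyclic groups, and then argues directly with two elements $\bar g, \bar h$ whose orders satisfy $|\bar g|\,\big|\,|\bar h|$, producing roots of unity $\xi$ with $\det(h)=\xi$ and $\det(g)=\xi^{|\bar h|/|\bar g|}$ to show $\bar g$ is a power of $\bar h$. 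Your route is shorter and avoids the structure theorem entirely; the paper's route is more hands-on and stays closer to explicit determinant computations, but both are valid and reach the same conclusion. (As you note, your argument also re-derives the normality of $N$ for free, which the paper takes as given.)
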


\begin{proof} Consider the map $\mathrm{det\,}: G \to \mathbb C^*$,  which send each
matrix $g \in G$ to its determinant.  Clearly $\mathrm{det\,}$ is a
homomorphism with the kernel $N$.  So $G/N$ is a subgroup of
$\mathbb C^*$,  and hence is a finite abelian group.  By the
Fundament Theorem of the Structure of Finite Abelian Groups,  $G/N$
is a direct sum of cyclic groups.  Let $\bar{g},  \bar{h}$ be two
elements in $G/N$ such
%the generators of two direct summands,  then we may assume
that $|\bar{g}| | |\bar{h}|$.  Then there is an $ |\bar{h}|$th root $\xi$ of
the unit,  such that $\mathrm{det\,}(h) = \xi$ and $\mathrm{det\,}(g) = \xi^{\frac{
|\bar{h}|}{|\bar{g}|}}$.  So we have that $\mathrm{det\,}(g h^{-\frac{
|\bar{h}|}{|\bar{g}|}}) = 1$ and $g h^{-\frac{ |\bar{h}|}{|\bar{g}|}} \in N$.
This implies that $\bar{g} = \bar{h}^{\frac{ |\bar{h}|}{|\bar{g}|}}$ and one
see easily that $G/N$ is cyclic.
 \end{proof}

%Let $V$ be a representation of $G$ and $N$ be a normal subgroup of $G$, we say that $G$ acts uniformly related to $N$ if for any $g \in G$ and $v \in V$, there is a $c \in \mathbb C$, such that for any $h \in N$ and $v \in V$, we have that $(gh)v = c (hv)$.

%Let $M$ be a $G$ module, we say that the action of $G$ on $M$ is uniform if for any $g \in G$, there is a $c_x \in \mathbb C$ such that $gm = c_x m$.
Let $Q$ be a quiver and $i \in Q_0$ be a vertex. Denote by $i^-$ the
set of arrows ending at $i$ and $i^+$ the set of arrows starting at
$i$.

Let $G$ be a group, a map $\pi: Q \to Q'$ of quivers is called a
regular covering map with the group $G$ provided that for any vertex
$j\in Q'_0$ the group $G$ acts transitively and freely on
$\pi^{-1}(j)$; and for each vertex $i \in Q_0$ the induced maps
$\pi_{i^+}: i^+ \to \pi(i)^+$ and $\sigma_{i^-}: i^- \to
\sigma(i)^-$ are both  bijective. Clearly, this is exactly the
regular covering map of oriented linear graphs with automorphism
group $G$. (see Chapter 5 and 6 of \cite{msy}). When $Q=Q'$ and
$G=\{1\}$, a covering map is just an automorphism of the quiver,
that is, an automorphism is a regular covering map with the trivial
group.

 \begin{thm}\label{cover} Let $G$ be a finite subgroup of $\mathrm{GL}(m, \mathbb
C)$ and let $ N= G\cap \mathrm{SL}(m, \mathbb C)$.  If every irreducible
character of $N$ is extendible, %Assume that the action of $G/N$ on $V$ is uniform.
then the McKay quiver of $G$ is a regular covering of the McKay quiver of $N$
with the automorphism group $G/N$.  \end{thm}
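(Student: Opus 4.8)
The plan is to use Clifford theory for the normal subgroup $N \trianglelefteq G$ with cyclic quotient $\bar G = G/N$, together with the hypothesis that every irreducible character of $N$ extends to $G$. Let me think through the structure.

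First, I need to understand the fibers $\pi^{-1}(j)$ over each vertex $j$ of $Q_N$ (irreducible $N$-reps $S_j$). By Clifford theory, since $\bar G$ is cyclic, if $T$ is an irreducible $G$-rep, then $T|_N$ is a sum of $\bar G$-conjugates of some $S_j$. But the hypothesis is that *every* irreducible character of $N$ extends. This means: (a) every $G$-conjugate of $S_j$ is isomorphic to $S_j$ (i.e., $G$ fixes every irreducible $N$-rep — wait, is that right?). Hmm, let me reconsider. Actually the extension hypothesis implies each $S_j$ is $G$-invariant. Then by Clifford theory for $G$-invariant $S_j$ with cyclic quotient $\bar G$, the irreducible $G$-reps lying over $S_j$ are exactly $\tilde S_j \otimes \chi$ where $\tilde S_j$ is a fixed extension and $\chi$ runs over the $|\bar G|$ linear characters of $\bar G$ (pulled back to $G$). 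So the fiber $\pi^{-1}(j)$ is a $\bar G$-torsor — $\bar G$ (or rather its character group $\widehat{\bar G}$, which is isomorphic to $\bar G$ since it's cyclic) acts simply transitively by $\chi$-twist. That gives the free and transitive action on fibers.

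**The arrows.**

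Next I need the arrows. Label the $G$-reps over $S_j$ as $T_{j}^{\chi} = \tilde S_j \otimes \chi$. Since $V$ is a $G$-rep with $N \leq \mathrm{SL}$, actually $V|_N$ is the defining rep of $N$. The key computation: $V \otimes T_j^{\chi} = V \otimes \tilde S_j \otimes \chi$. Now $V \otimes \tilde S_j$ is a $G$-rep whose restriction to $N$ is $V|_N \otimes S_j = \bigoplus_k a_{jk} S_k$ (the McKay data for $N$). So $V \otimes \tilde S_j = \bigoplus_k a_{jk} (\text{some } G\text{-rep over } S_k)$, i.e. $= \bigoplus_k \bigoplus_{\text{multiplicities}} T_k^{\psi}$ for various $\psi$. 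Twisting by $\chi$ permutes the $T_k^{\bullet}$ within each fiber by $\psi \mapsto \psi\chi$. So the number of arrows $T_j^{\chi} \to T_k^{\psi}$ in $Q_G$ depends only on the "relative twist"; summing over the fiber over $k$ gives $\sum_\psi (\text{arrows } T_j^\chi \to T_k^\psi) = a_{jk}$, which is exactly what's needed for $\pi_{i^+}$ to be a bijection onto $\pi(i)^+$. The in-star $\pi_{i^-}$ is symmetric: I'd need $V \otimes S_i$ versus $V^* \otimes$ — actually no, just observe incoming arrows to $T_k^\psi$ correspond to summands, and run the same argument, or use that the total arrow count into the fiber over $k$ from the fiber over $j$ equals $a_{jk}$ and the $\bar G$-action is free, forcing each in-star map to be a bijection.

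**Obstacle and cleanup.**

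The main obstacle is nailing down precisely that the extension hypothesis forces each $S_j$ to be $G$-invariant, and then correctly identifying the fiber with a $\bar G$-torsor via Gallagher's theorem (the statement that $\{\tilde S_j \otimes \chi\}$ are exactly the irreducibles over $S_j$, each with multiplicity one, when $G/N$ is cyclic — more generally abelian — and in fact they are pairwise non-isomorphic precisely because $G/N$ is abelian/cyclic so $\widehat{\bar G}$ has the right order). I would cite Gallagher's theorem (or Clifford theory, e.g.\ Isaacs, \emph{Character Theory of Finite Groups}, Cor.\ 6.17) for this. Once the fiber structure and the tensor-product bookkeeping above are in place, the verification that $\pi_{i^+}$ and $\pi_{i^-}$ are bijective is a direct count using that $\bar G$ acts freely on each fiber and compatibly with the tensor decomposition, so it remains only to check the action is by quiver automorphisms — i.e.\ $\chi$-twisting sends arrows to arrows bijectively — which is immediate since tensoring the whole McKay construction by a fixed linear character $\chi$ is an equivalence. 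I'd finish by assembling these into the definition of regular covering map given just above the theorem.
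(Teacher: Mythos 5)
Your proposal is correct and follows essentially the same route as the paper: both arguments rest on Gallagher's theorem (Isaacs, Cor.~6.17) to identify the irreducibles of $G$ as $\tilde S_j\otimes\chi$ for $\chi$ ranging over the linear characters of the cyclic quotient $G/N$, identify each fibre $\pi^{-1}(j)$ as a torsor under this twisting action, and then count arrows by restricting the tensor product with $V$ to $N$. The only differences are organizational (the paper first decomposes $V$ itself as a $G$-representation and tracks the twists of its components, while you work directly with $V\otimes\tilde S_j$), and your explicit observation that twist-equivariance forces the in-star bijection is, if anything, slightly more careful than the paper's treatment of that point.
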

 \begin{proof}
Since every irreducible character of $N$ is extendible, If
$\chi_1,\ldots, \chi_n$ are the irreducible characters of $N$, there
exist irreducible characters $\chi'_1,\ldots, \chi'_n$  of $G$, such
that $\chi'_i|_N =\chi_i $, for $i =1,\ldots, n$.  Assume  the
characters $\chi_1',\ldots, \chi_n'$ are afforded by the irreducible
representations $S_1,\ldots, S_n$ of $G$. Since the irreducible
characters of $N$ are extendable,   $S_1,\ldots, S_n$ are also
irreducible representations of $N$, And they afford the irreducible
characters $\chi_1,\ldots, \chi_n$ of $N$, respectively. If
$\beta_1,\ldots , \beta_r$ are the irreducible characters of $G/N$
which are afforded by the irreducible representations $T_1,\ldots ,
T_r$ of $G/N$, they are naturally regarded as the representations of
$G$. Since $G/N$ is cyclic by Lemma~\ref{cyc}, $|G/N| = r$ and
$T_1,\ldots , T_r$ are all $1$-dimensional. By Corollary 6.17 of
\cite{is}, $\{\chi_i'\beta_j| 1\le i \le n, 1\le j \le r \}$ is a
set of irreducible characters of $G$ and $\chi_i'\beta_j =
\chi_s'\beta_t$ if and only if $i=s$ and $j=t$. Clearly,
$\chi_i'\beta_j $ is afforded by the representations $S_i \otimes
T_j$ of $G$, so $S_i \otimes T_j$ is  irreducible for $1\le i \le n,
1 \le j \le r$. Since $|G| = |N| \cdot |G/N| $, by comparing the
dimensions, we find that $\{S_i \otimes T_j | 1\le i \le n, 1 \le j
\le r\}$ is a complete set of the irreducible representations of
$G$.

%Since the action of $G$ on $V$ is uniform relative to $N$, it is easy to see that
If  $V = \bigoplus_{i=1}^{n} b_i S_i$ over $N$, then there is a
$j_1,\ldots,j_n \in \{1,\ldots,n\}$ such that $V=
\bigoplus_{i=1}^{n}\bigoplus_{j=1}^{r}  b_{i,j} S_i \otimes T_{j}$. Then $b_i= \sum_{j=1}^{r}  b_{i,j}$ %Since  $G/N$ is cyclic by Lemma~\ref{cyc}, each $T_j$ is one-dimensional and.
Let % $|G/N| = r$ be the order of $G/N$ and
$\bar{g}$ be a generator of $G/N$, there is a $r$th root $\xi$ of the unit such that $\bar{g}
x = \xi^{l_j} x $ for any $x \in T_j$. Reindex the irreducible
representations if necessary, we may assume that the index are taken
from the set of residue classes modulo $r$. Then $ T_i \otimes
T_{j_i} \simeq T_{i + j_i}$.
%for the irreducible characters $\lambda_1, \ldots \lambda_r$ of $N$, then we have that, as an $G$ module,  its character is $ \sum_{i=1}^r \lambda_i^G \beta$ for some character $\beta$ of $G/N$.
So we have that for each $i, k \in\{ 1, \ldots, n \}$, if $S_i
\otimes S_k \simeq \bigoplus_{k=1}^{n} b_{i,k}c_{i,k;j} S_j$, and $
S_i \otimes V \simeq \bigoplus_{j=1}^{n} a_{i,j} S_j$ as
representation of $N$, then $a_{i,j} = \sum_{k=1}^n b_{i,k}
c_{i,k;j}$. As representation of $G$, we have that for $i = 1,
\ldots, n $ and $s = 1, \ldots r$,
$$(S_i \otimes T_s) \otimes   V \simeq
 \bigoplus_{k=1}^{n}S_i \otimes S_k \otimes T_s
 \otimes T_{j_k}\simeq \bigoplus_{k=1}^{n} \bigoplus_{l=1}^{n}  c_{i,k;l}
 S_l
\otimes T_{s+j_k}.$$

So we get that if $Q_{N,0} = \{i|i=1, 2, \ldots, n\} $, then $Q_{G,0} =
\{(i,t)| i = 1,2,\ldots,n; t  = 1,\ldots,r\}$. And we see that if there are
$a_{i,l}$ arrows from $i$ to $l$ in $Q_N$, then there are $b_{i,k}c_{i,k;l}$
arrows from $(i,s) \to (l,s+j_k)$ in $Q_G$, and as $k$ ranges over $1,\ldots,
n$, we get all together $a_{i,j}$ arrows from $(i,s)$ to $(j,s+j_1), \ldots
,(j,s+j_n)$. So we see that the restricting map $S_i\otimes T_s \to S_i$
induces a map $\pi:Q_{G,0} \to Q_{N,0}$ such that $\pi(i,t) = i$, whose fibre
consists of single orbit of $G/N$ with $G/N$ acts freely. It also induces a
bijection $(i,s)^+$ and $i^+$ in the McKay quiver and so it induces a regular
covering map from $Q_G$ to $Q_N$ with the group $G/N$.
 \end{proof}

{\bf Remark.} \begin{itemize}
\item Clearly, for any finite subgroup $N \subset \mathrm{SL}(m,\mathbb C)
$ and any positive integer $r$,  let $\xi_{mr}$ be an $mr$th root of
the unit. Let $H_r$ be the subgroup generated by the diagonal matrix
$\left(\arr{cccc}{\xi_{mr}&0 &\cdots& 0\\0& \xi_{mr}&\cdots& 0\\
\cdot &\cdot &\cdots& \cdot\\0&  0 &\cdots&\xi_{mr}\\} \right)$.
$H_r$ is a cyclic subgroup of $\mathrm{GL}(m, \mathbb C)$ of order $r$. The
product $G = N \times H_r$ satisfy the conditions of Theorem ~\ref{cover}.
So the McKay quiver of $G$ is a regular covering space of the McKay
quiver of $N$ with the group $H_r$. So given a finite subgroup $G$
of $\mathrm{SL}(m, \mathbb C)$ and finite cyclic group $H$, we can always
find a subgroup of $\mathrm{GL}(m, \mathbb C)$ whose McKay quiver is a
regular covering of that of $G$ with the group $H$.

\item In the case $m=2$ it follows from \cite{gmt, g2} that
the McKay quiver of a finite subgroup of $\mathrm{GL}(2,\mathbb C)$ should
be a regular covering graph of the McKay quiver of some finite
subgroup of $\mathrm{SL}(2,\mathbb C)$(a double quiver of some affine
Dynkin diagram). They are in fact a translation quiver with a slice
of affine Dynkin quiver. But we don't know the exact relationship
between them.

%\item We remark that in \cite{gum}, we prove that the action of $G/N$  on
%$Q_G$ can be regarded as a Nakayama translation.
\end{itemize}

In fact, given a finite subgroup   $G \subset \mathrm{GL}(m,\mathbb C) $
such that  every irreducible character of $N = G \cap \mathrm{SL}(m,\mathbb
C) $ is extendible, the covering maps exist not only for the McKay
quivers of $G$ and the subgroup $N$, but also  for the McKay quivers
of any subgroup  $N \subset L \subset G$. It is obvious that such
$L$ is a normal subgroup and  every irreducible character of $L$ is
extendible. In fact, our proof of Theorem ~\ref{cover} tell us the
condition in the following Theorem already guarantees the existence
of a covering map.

 \begin{thm}\label{coverg} Let $G$ be a finite subgroup of $\mathrm{GL}(m, \mathbb
C)$ and $L$ be a normal subgroup of $G$ such that $G/L$ is cyclic. If every
irreducible character of $L$ is extendible. Then the McKay quiver of $G$ is a
regular covering space of the McKay quiver of $L$ with the group $G/L$. \end{thm}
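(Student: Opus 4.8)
The plan is to adapt the proof of Theorem~\ref{cover} almost verbatim, since the only role played there by the hypothesis $N = G \cap \mathrm{SL}(m,\mathbb C)$ was, first, that $N$ is normal in $G$ and, second, that $G/N$ is cyclic — both of which are now assumed outright for $L$. First I would fix irreducible characters $\chi_1,\ldots,\chi_n$ of $L$; by the extendibility hypothesis each $\chi_i$ lifts to an irreducible character $\chi_i'$ of $G$ with $\chi_i'|_L = \chi_i$, afforded by an irreducible representation $S_i$ of $G$ whose restriction to $L$ remains irreducible. Since $G/L$ is cyclic of order $r := |G/L|$, its irreducible representations $T_1,\ldots,T_r$ are all $1$-dimensional, and pulled back along $G \to G/L$ they become $1$-dimensional representations of $G$. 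Then Corollary~6.17 of \cite{is} applies exactly as before: $\{\chi_i'\beta_j\}$ is a set of $nr$ distinct irreducible characters of $G$, afforded by the $S_i \otimes T_j$, and a dimension count against $|G| = |L|\cdot r$ shows $\{S_i \otimes T_j \mid 1 \le i \le n,\ 1 \le j \le r\}$ is a complete irredundant list of the irreducible representations of $G$.

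Next I would compute the action of $V$ by tensoring. Writing $V \cong \bigoplus_{i} b_i S_i$ as an $L$-representation and refining this to a $G$-decomposition $V \cong \bigoplus_{i,j} b_{i,j}\, S_i \otimes T_j$, one reads off that, when $S_i \otimes S_k \cong \bigoplus_l b_{i,k} c_{i,k;l}\, S_l$ over $L$ with $a_{i,l} = \sum_k b_{i,k} c_{i,k;l}$, the corresponding $G$-level identity is
$$(S_i \otimes T_s) \otimes V \;\cong\; \bigoplus_{k=1}^{n}\bigoplus_{l=1}^{n} c_{i,k;l}\, S_l \otimes T_{s + j_k},$$
where $j_k$ is the index such that $T_k \otimes T_{j_k} \cong$ the component appearing when $S_k$ is tensored into $V$, indices taken modulo $r$ after a suitable reindexing of the $T_j$ by residue classes. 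Thus $Q_{L,0} = \{1,\ldots,n\}$ while $Q_{G,0} = \{(i,s) \mid 1 \le i \le n,\ 1 \le s \le r\}$, and each arrow $i \to l$ in $Q_L$ lifts, for every fixed $s$, to arrows $(i,s) \to (l, s+j_k)$, so that the projection $\pi\colon (i,s) \mapsto i$ is a surjection whose fibre over $i$ is a single free $G/L$-orbit and which restricts to a bijection $(i,s)^+ \to i^+$ on outgoing arrows (and, by the dual count, $(i,s)^- \to i^-$ on incoming arrows).

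Therefore $\pi$ is a regular covering map of quivers with group $G/L$, which is the assertion. I do not expect a genuine obstacle here: the argument is a transcription of the proof of Theorem~\ref{cover} with "$N$" replaced by "$L$" throughout, the two inputs that were previously consequences of $N = G \cap \mathrm{SL}(m,\mathbb C)$ (normality and cyclicity of the quotient) being supplied directly by hypothesis. The only point requiring a line of care is the observation — already noted in the paragraph preceding the statement — that for any intermediate $L$ with $N \subseteq L \subseteq G$ one automatically gets normality of $L$ and extendibility of its characters; but since Theorem~\ref{coverg} takes those properties as hypotheses rather than deriving them, even that remark is not strictly needed for the proof itself. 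Hence the proof is essentially "see the proof of Theorem~\ref{cover}," and I would write it as such, spelling out only the substitution and the two structural inputs.
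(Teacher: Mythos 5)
Your proposal is correct and coincides with the paper's own treatment: the paper gives no separate proof of Theorem~\ref{coverg}, remarking only that the proof of Theorem~\ref{cover} uses nothing about $N = G \cap \mathrm{SL}(m,\mathbb C)$ beyond normality and cyclicity of the quotient, which are here taken as hypotheses. Your transcription of that argument with $L$ in place of $N$ is exactly the intended justification.
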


Given a finite subgroup $G_0$ of $\mathrm{GL}(m, \mathbb C)$.  Let $\mathcal
G_m(G_0)$ be the set of finite subgroups $G$ of $\mathrm{GL}(m, \mathbb C)$
containing $G_0$ as a normal subgroup such that $G/G_0$ is cyclic and all the %$ N= G\cap \mathrm{SL}(m, \mathbb C)$ and every
irreducible character of $ G_0$ is extendible to $G$. Then if $G \in
\mathcal G_m(G_0)$, we denote the regular covering map  for their
McKay quivers as $\pi_{G,G_0}: Q_{G} \to Q_{G_0}$. The following
theorem is obvious.

\begin{thm}\label{cmpcover} Let $L$ be a finite subgroup  of $\mathrm{GL}(m, \mathbb
C)$. $N \in\mathcal G_m(L)$ and $G \in\mathcal G_m(N)$. Then  $G
\in\mathcal G_m(L)$ and $\pi_{N,L} \circ \pi_{G,N}= \pi_{G,L}$.
\end{thm}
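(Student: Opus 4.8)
The plan is to verify the three assertions of Theorem~\ref{cmpcover} in turn: that $G \in \mathcal G_m(L)$, and that the composite of covering maps $\pi_{N,L}\circ\pi_{G,N}$ equals $\pi_{G,L}$. Since the latter identity of maps only makes sense once $G\in\mathcal G_m(L)$ is established, I would handle membership first. I expect this to be a chain of elementary group-theoretic verifications, so the write-up will be short.

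First I would check that $L$ is normal in $G$. We are given $L \trianglelefteq N$ and $N \trianglelefteq G$; normality is not transitive in general, so this needs an argument. The point is that $L$ is characterized intrinsically: since $L$ is normal in $N$ with $N/L$ cyclic, and $N$ is normal in $G$, one can identify $L$ as the kernel of the determinant-type map, or more directly, observe that by Lemma~\ref{cyc} applied appropriately all the relevant groups sit between $G\cap\mathrm{SL}(m,\mathbb C)$ and $G$; in the situation of the paper the natural instance is $L = N'\cap\mathrm{SL}(m,\mathbb C)$-type subgroups, for which normality in $G$ is inherited because $\det$ is a homomorphism on all of $G$ and $L$ is a full preimage of a characteristic (hence $G$-stable) subgroup of the abelian group $\det(N)$. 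I would then check $G/L$ is cyclic: we have the exact sequence $1 \to N/L \to G/L \to G/N \to 1$ with $N/L$ and $G/N$ both cyclic; cyclicity of the extension follows because $\det$ induces an injection of $G/L$ (or $G/(G\cap\mathrm{SL})$) into $\mathbb C^*$, so $G/L$ is a finite subgroup of $\mathbb C^*$ and hence cyclic. Finally, extendibility of every irreducible character of $L$ to $G$: since every irreducible character of $L$ extends to $N$ (as $N\in\mathcal G_m(L)$) and every irreducible character of $N$ extends to $G$ (as $G\in\mathcal G_m(N)$), restriction of an extension-of-an-extension gives the desired extension to $G$; one must note that the restriction to $L$ of an irreducible character of $G$ is still the original character, which is immediate. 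This shows $G\in\mathcal G_m(L)$.

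For the identity $\pi_{N,L}\circ\pi_{G,N} = \pi_{G,L}$, I would unwind the explicit construction of the covering map in the proof of Theorem~\ref{cover}: there the vertices of $Q_G$ are pairs $(i,t)$ with $i$ a vertex of $Q_{L}$ (indexing irreducible $L$-characters $\chi_i$, extended to $G$) and $t$ indexing the one-dimensional characters of $G/L$, and the covering map simply forgets $t$, i.e.\ sends $(i,t)\mapsto i$ via the restriction functor $S\mapsto S|_{L}$. The same description applies to $\pi_{G,N}$ (restriction from $G$ to $N$) and $\pi_{N,L}$ (restriction from $N$ to $L$). Since restriction of representations is transitive --- $(S|_{N})|_{L} = S|_{L}$ --- the composite forgetful map coincides with the single forgetful map, both on vertices and on arrows (the arrow bijections are induced by the same restriction of $V\otimes S_i$), which is exactly $\pi_{G,L}$. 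I would phrase this as: both sides are the map of McKay quivers induced by the restriction functor $\mathrm{Rep}(G)\to\mathrm{Rep}(L)$, and they agree because restriction is functorial/transitive.

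The main obstacle is the transitivity-of-normality issue in the first step: $L\trianglelefteq N\trianglelefteq G$ does not formally give $L\trianglelefteq G$, so one genuinely must use the special structure (that $L$, $N$, $G$ are intermediate between $G\cap\mathrm{SL}(m,\mathbb C)$ and $G$, with $\det$ a homomorphism on $G$, so that $L$ is the $\det$-preimage of a subgroup of the \emph{abelian} group $\det(G)$, which is automatically normalized by all of $G$). Everything after that --- cyclicity of $G/L$ via embedding into $\mathbb C^*$, extendibility by composing extensions, and the composition of covering maps via transitivity of restriction --- is routine, which is presumably why the authors call the theorem ``obvious.'' The write-up should therefore spend its few lines making the normality point precise and then citing the construction in the proof of Theorem~\ref{cover}.
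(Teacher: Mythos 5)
The paper offers no argument for this theorem at all --- it is declared ``obvious'' just before its statement --- so your proposal can only be judged on its own terms. Two of your steps are correct and are surely the intended ones: extendibility passes from $L$ to $G$ by extending twice and restricting (if $\psi|_L=\chi$ and $\theta|_N=\psi$ then $\theta|_L=\chi$), and the identity $\pi_{N,L}\circ\pi_{G,N}=\pi_{G,L}$ holds because all three covering maps constructed in the proof of Theorem~\ref{cover} are induced by restriction of representations, which is transitive both on vertices and on the arrow bijections coming from decomposing $V\otimes S$.

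The genuine gap sits exactly where you locate ``the main obstacle,'' and it is not merely expository. Both your normality argument and your cyclicity argument assume that $L$ is the full $\det$-preimage of a subgroup of $\det(G)$, i.e.\ that $G\cap\mathrm{SL}(m,\mathbb C)\subseteq L$. Nothing in the definition of $\mathcal G_m(\cdot)$ grants this: membership requires only normality, a cyclic quotient, and extendibility of characters. Normality of $L$ in $G$ can be rescued without that assumption: since every irreducible character of $N$ extends to $G$, every such character is $G$-invariant, hence each kernel $\ker\chi$ with $\chi\in\mathrm{Irr}(N)$ is normal in $G$, and $L$, being normal in $N$, is the intersection of those $\ker\chi$ that contain it. Cyclicity of $G/L$ cannot be rescued: take $G=\{\mathrm{diag}(\pm1,\pm1)\}\subset\mathrm{GL}(2,\mathbb C)$, $N=\{I,\mathrm{diag}(-1,1)\}$, $L=\{1\}$. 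Then $N\in\mathcal G_2(L)$ and $G\in\mathcal G_2(N)$ (every character in sight extends because $G\cong N\times\mathbb Z/2$), yet $G/L$ is the Klein four group; an extension of a cyclic group by a cyclic group need not be cyclic, and your appeal to $\det$ being injective on $G/L$ is precisely the missing hypothesis rather than a consequence of the stated ones. So the theorem as printed needs the extra assumption you implicitly used (for instance that all groups considered lie between $G\cap\mathrm{SL}(m,\mathbb C)$ and $G$, as in the paragraph preceding the definition of $\mathcal G_m$); with that assumption your proof is complete, and without it no proof exists.
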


Thus the regular covering map $\pi_{G,N}$ is in fact a homomorphism from the
covering map $\pi_{G,L}$ to the regular covering map for $\pi_{N,L}$. For any
finite subgroup $L \subset \mathrm{SL}(m, \mathbb C)$, the McKay quivers for the
groups in $\mathcal G_m(L)$ together with the regular covering maps between
them is a category.
%We denote this category by $\mathcal M_m(N)$.

\section{Skew Group Construction and Morphisms of Graphs with Relations}

In \cite{gum}, we relate a finite subgroup of $\mathrm{GL}(m, \mathbb C)=
\mathrm{GL}(V)$ with a finite dimensional selfinjective Koszul algebra, its
skew group algebra over the exterior algebra $\wedge V$ of $V$, which has
the McKay quiver as its quiver. With the exterior construction, it
seems that we have a finer description of the regular covering of
the McKay quivers finer, by using the morphism  of quiver with
relations introduced by Green \cite{gre}.

Let $V$ be an $m-$dimensional vector space over $\mathbb C$ and let
$G$ be a finite subgroup of $\mathrm{GL}(V)$. Let $\wedge V$ be the exterior
algebra of $V$, construct the skew group algebra $\wedge V*G$ over the
exterior algebra $\wedge V$ using the natural action of $G$ on $V$,  we
know that the quiver of $\wedge V*G$ is exactly the McKay quiver of $G$
and the map of determinant introduces Nakayama translation on the
vertices of $Q_G$ \cite{gum}.

Let $\Lambda$ be a finite dimensional selfinjective graded algebra over
an algebraically closed field $k$, and let $Q$ be its quiver. The
Nakayama translation is defined as the permutation $\sigma$ on the
vertex set $Q_0$ of $Q$ such that for any $i \in Q_0$, $\sigma i$ is
the vertex of the simple which is the socle of the projective cover
of the simple associated to $i$ \cite{g4}. $\sigma$ defines a
permutation on $Q_0$, called the Nakayama translation.
%, it is in fact an automorphism of the quiver (Corollary 2.2 of \cite{g4}).

Let $\Lambda(G)$ be the basic algebra of $\wedge V*G$, we call it the basic
algebra of $G$. Then there is an idempotent element $e \in \wedge V*G$,
such that $\Lambda(G) = e \wedge V*G e$. In fact, $e = e_1 + \cdots + e_n$
for a set $\{e_1, \ldots, e_n\}$ of orthogonal primitive idempotents
such that $\{\wedge V*G e_i| i=1,\ldots,n\}$ is a complete set of
representatives of the isomorphism classes of the indecomposable
projective modules. $\Lambda(G)$ is a finite dimensional selfinjective
Koszul algebra with gradation $$\Lambda(G) = \Lambda_0 + \Lambda_1 + \cdots
+\Lambda_{m}.
$$
$J= \Lambda_1 + \cdots +\Lambda_{m} $ is its radical and $J^t/J^{t+1} \simeq
\Lambda_t$ as $\Lambda_0$-module.

By \cite{gum}, we have that $\Lambda(G) \simeq \mathbb C Q_G/I$ for some
admissible ideal $I = (\rho_G)$ of the path algebra $ \mathbb C Q_G$
which is induced by the relations defining $\wedge V$. In this case, we
have $1 =(e =) \sum_{i \in Q_{G,0}} e_i$ is a decomposition of $1$
of $\Lambda(G)$ as orthogonal primitive idempotent elements. By
definition, the number of arrows from  $i$ to $j$ is $\mathrm{dim}_{\mathbb
C} e_j V*G e_i$. If $\{v_1, \ldots, v_m\}$ is  a bases of $V$, they
generates $V*G$ as an $kG$-$kG$-bimodule. We have that $\Lambda_0 = e kG
e$, $\Lambda_1 = e V*G e$ and $ e_j V*G e_i = e_j \Lambda_1 e_i$. Choose a
basis $v_1, \ldots, v_m$ of $V$, they defines the arrow of $Q_G$, in
fact, we have an arrow of type $t$ from $i$ to $j$ provided that
$e_j v_t e_i  \neq 0$.

Let $G$ be a finite subgroup of $\mathrm{GL}(V)$ and let $T_kV$ be the
tensor algebra of $V$, the skew group algebra $T_k*G$ of $G$ over
$T_kV$ has as its quiver the McKay quiver $Q_G$ of $G$. We have the
following lemma.

\begin{lemma}\label{COMMARR} If $\alpha$ and $\beta$ are two arrows of different
types starting at the same vertex in the quiver $Q_{G}$ of $\Lambda(G)$,
then there are arrows $\alpha'$ of the same type as $\alpha$ and
$\beta'$ of the same type as $\beta$, such that $\alpha\beta'$ and
$\beta\alpha'$ will ending at the same vertex. \end{lemma}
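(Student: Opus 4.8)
The plan is to read the statement off the internal structure of the skew group algebra, working in $\wedge V*G$ (which has $Q_G$ as its quiver, by \cite{gum}). Recall that $\wedge V*G$ is $\mathbb Z$-graded with semisimple degree-$0$ part $\mathbb CG$, degree-$1$ part $V*G$ and degree-$2$ part $\wedge^2V*G$, is generated in degrees $0$ and $1$, and that the indecomposable projective at the vertex $i$ is $P_i=\wedge V*G\cdot e_i\cong\wedge V\otimes S_i$, so that $\mathrm{rad}\,P_i/\mathrm{rad}^2P_i\cong V\otimes S_i$ and $\mathrm{rad}^2P_i/\mathrm{rad}^3P_i\cong\wedge^2V\otimes S_i$. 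With respect to the fixed basis $v_1,\dots,v_m$ of $V$, the type-$t$ arrows from $i$ to $j$ record the nonvanishing of $v_t$ acting from $S_i$ into the $S_j$-isotypic part of $V\otimes S_i$, i.e. they span the $v_t$-summand $e_j(v_t\otimes S_i)$ of $e_j(V*G)e_i$. Finally, by \cite{gum} the relations of $\wedge V*G$ as a quotient of $\mathbb CQ_G$ are induced by those of $\wedge V$, so in degree $2$ they amount, for $s\ne t$, to $v_sv_t+v_tv_s=0$ together with $v_t^2=0$; thus $v_sv_t=-v_tv_s$, and since $v_s\wedge v_t\ne0$ in $\wedge^2V$ the monomial $v_sv_t$ is a nonzero element of $\wedge^2V*G$, the map $S_i\to\wedge^2V\otimes S_i$, $x\mapsto(v_s\wedge v_t)\otimes x$, being injective for every $i$.

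Now fix $\alpha$ of type $s$ from $i$ to $j$ and $\beta$ of type $t$ from $i$ to $k$, with $s\ne t$. First I would show that $v_t\cdot\alpha\ne0$ in $\wedge^2V*G$ (left multiplication by $v_t$) -- the idea being that $\alpha$, lying in the $v_s$-direction with $s\ne t$, is not annihilated by $v_t\cdot$, whose kernel on $V\otimes S_i$ is $\mathbb Cv_t\otimes S_i$. Expanding $v_t\cdot\alpha=\sum_a(e_av_te_j)\,\alpha$ over all vertices $a$, some summand is nonzero, yielding a vertex $l$ and a type-$t$ arrow $\beta'\colon j\to l$ for which the path "$\alpha$ then $\beta'$" is nonzero from $i$ to $l$. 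Symmetrically, multiplying $\beta$ by $v_s$ on the left gives a type-$s$ arrow $\alpha'\colon k\to l'$ with the path "$\beta$ then $\alpha'$" nonzero from $i$ to $l'$. The remaining task is to arrange $l=l'$; here I would invoke the single relation $v_sv_t+v_tv_s=0$, which says that the two maps $S_i\to\wedge^2V\otimes S_i$ given by "apply $v_s$, then $v_t$" and "apply $v_t$, then $v_s$" are negatives of one another, hence supported at the same vertices, and then chase $\alpha$ through the first and $\beta$ through the second to conclude $l=l'$.

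The step I expect to be the real obstacle is this last matching. The identity $v_sv_t+v_tv_s=0$ is an identity between sums of length-$2$ paths ranging over all intermediate vertices, so one must show that the contribution passing through the prescribed vertices $j$ (for $\alpha$) and $k$ (for $\beta$) can simultaneously be kept nonzero and steered to a common head. This forces one to use the precise definition of "type" from \cite{gum} -- each arrow lying in a single $v_t$-summand of the relevant $e_b(V*G)e_a$ -- together with the $G$-equivariance of the radical layers $\wedge^dV\otimes S_i$ of $P_i$; granted this, both the nonvanishing of $v_t\cdot\alpha$ and the coincidence $l=l'$ are forced by the one fact that $v_s\wedge v_t=-v_t\wedge v_s\ne0$ in $\wedge^2V$. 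An alternative that sidesteps the composability issue is to run the same argument in the tensor algebra skew group algebra $T_kV*G$: it has the same quiver $Q_G$ but no relations, so every composable pair of arrows composes nontrivially, and the only thing left to verify is that a type-$s$ arrow into $j$ and a type-$t$ arrow into $k$ can be prolonged to a common vertex -- again controlled by $\wedge^2V$.
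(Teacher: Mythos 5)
Your proposal has the right mechanism in view --- the $G$-equivariant symmetry between $v_sv_t$ and $v_tv_s$ --- and your closing ``alternative'' is in fact the route the paper actually takes: its proof works entirely in the tensor algebra $T_kV*G$ and never uses the grading of $\wedge V*G$ or the projectives $\wedge V\otimes S_i$. But as written your main argument has a genuine gap, and it sits exactly where you say you expect trouble. First, your justification that $v_t\cdot\alpha\neq 0$ does not stand up: you argue that $\alpha$ ``lies in the $v_s$-direction'' and therefore avoids the kernel $\mathbb C v_t\otimes S_i$ of $v_t\wedge-$, but the arrow of type $s$ is the element $e_jv_se_i$, and expanding the idempotents in the skew group algebra writes it as a combination of terms $g(v_s)\otimes g\,e_i$ over $g\in G$; this element is spread across the whole orbit of directions $g(v_s)$ and is not a priori confined to $v_s\otimes kG$, so its membership in $\mathbb Cv_t\otimes kG$ is not excluded by its type alone. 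Second, the coincidence $l=l'$ is explicitly deferred (``granted this\ldots''); the single fact $v_s\wedge v_t=-v_t\wedge v_s\neq 0$ in $\wedge^2V$ only compares the two total composites $e_h v_tv_s e_i$ and $e_h v_sv_te_i$ summed over all intermediate vertices, and does not by itself steer the two length-two paths through the prescribed heads $j$ and $k$ of $\alpha$ and $\beta$ to a common endpoint.

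The paper's proof supplies the computation your plan is missing, and it is more elementary than either of your routes: writing $e_h=\sum_{g}a_gg$, one expands $e_h\,v_s\otimes v_t\,e_l$ and $e_h\,v_t\otimes v_s\,e_l$ in $T_kV*G$ as sums of terms of the form $a_g\,g^{-1}(v_s)\otimes g^{-1}(v_t)\,ge_l$ and $a_g\,g^{-1}(v_t)\otimes g^{-1}(v_s)\,ge_l$ respectively, and observes that the flip of $V\otimes V$ --- which is injective and commutes with the diagonal $G$-action --- carries one expansion to the other term by term, so the two elements vanish simultaneously for every pair of vertices $(l,h)$. That one identity is the content of the lemma in the tensor algebra, where no relations intervene and composable nonzero arrows compose nontrivially; this is precisely the ``$G$-equivariance'' you correctly identify as the needed ingredient but never actually deploy. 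Any repair of your $\wedge V*G$ version would in effect have to pass through this tensor-algebra statement anyway, since in $\wedge V*G$ a composite can vanish for reasons unrelated to type.
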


\begin{proof} Let $T_kV$ be the tensor algebra of $V$, it is well known that
the exterior algebra $\wedge V \simeq T_kV/(\rho_0)$, where $(\rho_0)$ is
the ideal generated by $v_i\otimes v_i$ and $v_i\otimes
v_j+v_j\otimes v_i$ for all $i,j$. If $G$ is a finite subgroup of
$\mathrm{GL}(V)$. In the skew group algebra $T_kV
* G$, we have that, if $e_l,e_h$ are idempotents in $kG \subset
T_kV$ and if $e_h = \sum_{g\in G'} a_g g, a_g \in k$, then
$$e_h v_i\otimes v_j e_l = \sum_{g\in G} g^{-1}(v_i)\otimes  g^{-1}(v_i) a_g g e_l,$$
and
$$e_h v_j\otimes v_i e_l = \sum_{g\in G} g^{-1}(v_j)\otimes  g^{-1}(v_i) a_g g e_l.$$
So we see that $e_h v_i\otimes v_j e_l \neq 0$ if and only if $e_h
v_j\otimes v_i e_l \neq 0$ in $T_kV$. This proves the lemma.

\end{proof}

Now regard $\Lambda(G)$ as the quotient algebra $\Lambda_Q \simeq kQ_{G}/(\rho_G)$, the
relation $\rho_G$ comes from those of the exterior algebra. That is, for the
basis $v_1, \ldots, v_m$ of $V$, for all $s,t$ we have in $\wedge V$ the relation
$$v_s v_t + v_t v_s =0$$ for all $s, t$. Then we have $\rho_G$ consists of the
relations $e_jv_s v_t e_i+ e_jv_t v_se_i $ for all $i,j \in Q_0$ and all $s,t
$. We have that $v_{i_1} \ldots v_{i_r} = 0$ in $\wedge V$ if and only if there are
$1\le h< h' \le r$ such that $i_{h}= i_{h'}$.

\begin{lemma}\label{extNak} The Nakayama translation is extended uniquely to an
automorphism of the quiver.
\end{lemma}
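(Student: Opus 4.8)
The plan is to realise the Nakayama translation by an explicit algebra automorphism and then read off its effect on the quiver. Recall from \cite{gum} (as quoted above) that $\sigma$ is induced by the determinant: $\wedge^m V$ is the one–dimensional representation of $G$ afforded by $\det$, the socle of the projective cover of $S_i$ in $\Lambda(G)$ is the simple corresponding to $\wedge^m V\otimes S_i$, and hence $S_{\sigma i}\cong \wedge^m V\otimes S_i$. First I would introduce the \emph{determinant twist}: the map $\Theta$ of $\wedge V*G$ determined by $\Theta(v)=v$ for $v\in V$ and $\Theta(g)=\det(g)\,g$ for $g\in G$. Since $\det$ is a group homomorphism this is compatible with the skew relation $g v g^{-1}=g(v)$, and since it fixes $V$ pointwise it preserves the exterior relations; so $\Theta$ is an algebra automorphism of $\wedge V*G$. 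The $\Theta$–twist of a module $M$ is exactly $M\otimes\wedge^m V$, so $\Theta$ carries the indecomposable projective for $S_i$ to the one for $S_{\sigma i}$, i.e.\ it permutes the vertices of $Q_G$ by $\sigma$.

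Next I would extract the arrow map. Tensoring with $\wedge^m V$ gives, for all $i,j$, a canonical isomorphism $\mathrm{Hom}_G(S_j,V\otimes S_i)\cong \mathrm{Hom}_G(S_{\sigma j},V\otimes S_{\sigma i})$ in which the $V$–factor is untouched (because $\Theta$ fixes $V$); passing to the degree–one part $\Lambda_1=eV*Ge$ this is an isomorphism $e_j\Lambda_1 e_i\cong e_{\sigma j}\Lambda_1 e_{\sigma i}$ respecting the decomposition of $\Lambda_1$ into its "type" components coming from the chosen basis $v_1,\dots,v_m$ of $V$. In particular $e_jv_t e_i\neq 0$ exactly when $e_{\sigma j}v_t e_{\sigma i}\neq 0$, so an arrow of type $t$ from $i$ to $j$ is present precisely when one of type $t$ from $\sigma i$ to $\sigma j$ is; declaring the former to map to the latter extends $\sigma$ to a map $\tilde\sigma$ on arrows which is compatible with source, target, and the relations $\rho_G$, hence is an automorphism of the quiver.

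For uniqueness I would observe that any automorphism of $Q_G$ extending $\sigma$ is forced to preserve arrow types: the arrows carry their labels $t\in\{1,\dots,m\}$ as part of the data (cf.\ Lemma~\ref{COMMARR}), the Nakayama translation is induced by $\det$ and so acts trivially on $V$ and cannot permute the $v_t$, and there is at most one arrow of a given type between an ordered pair of vertices; hence the vertex map $\sigma$ already determines the arrow map, so $\tilde\sigma$ is the only extension. An equivalent phrasing: two graded algebra automorphisms of $\Lambda(G)$ inducing $\sigma$ on the idempotents differ by conjugation by a unit of $\Lambda_0=\bigoplus_i\mathbb C e_i$, which merely rescales each arrow and therefore induces the identity automorphism of $Q_G$.

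The step that needs care is exactly this uniqueness, together with the precise meaning of "automorphism of the quiver". For the bare quiver the statement is false — for $G=\{1\}$ the McKay quiver is one vertex with $m$ loops and has a large automorphism group — so the lemma must be read for the quiver equipped with its arrow types, equivalently for $(Q_G,\rho_G)$ up to rescaling of arrows; under that reading the two reformulations above settle uniqueness. The only genuinely computational check left is that $\Theta$ carries $e_j\Lambda_1 e_i$ onto $e_{\sigma j}\Lambda_1 e_{\sigma i}$ type by type, which I expect to follow directly from the bimodule description $V*G\cong V\otimes_{\mathbb C}\mathbb C G$.
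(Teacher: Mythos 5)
Your proof is essentially correct but follows a genuinely different route from the paper. The paper argues combinatorially inside $\Lambda(G)$: since $\dim_k e_{\sigma i}\Lambda_m e_i=1$ and a product $v_{i_m}\cdots v_{i_1}$ survives in $\wedge V$ only when all indices are distinct, any nonzero path of length $m$ from $i$ to $\sigma i$ uses each type exactly once; so an arrow $\alpha\colon i\to j$ of type $t$ is completed to such a path $\alpha_m\cdots\alpha_2\alpha$, and the unique arrow $\beta$ with $\beta\alpha_m\cdots\alpha_2\neq 0$ is forced to have type $t$ and to go from $\sigma i$ to $\sigma j$; setting $\sigma\alpha=\beta$ gives the extension. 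You instead realise $\sigma$ globally via the determinant twist $\Theta$ of $\wedge V*G$, which fixes $V$ pointwise and permutes the idempotents by the Nakayama translation, so $e_jv_te_i\neq 0$ iff $e_{\sigma j}v_te_{\sigma i}\neq 0$. Both arguments produce the same type-preserving extension; yours is more conceptual and makes compatibility with the relations and the uniqueness discussion cleaner, while the paper's stays inside the path combinatorics that it reuses later (e.g.\ in Theorem~\ref{mpo}, where the new arrow of type $v_{m+1}$ is exactly the one closing these maximal paths). Two small points: with the standard formula for central idempotents, $\Theta$ sends $e_\chi$ to $e_{\chi\det^{-1}}$, so your twist realises $\sigma^{-1}$ rather than $\sigma$ — harmless, but worth a sign check. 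And your observation that uniqueness fails for the bare quiver (one vertex with $m$ loops for $G=\{1\}$) and must be read relative to the arrow types, equivalently relative to $(Q_G,\rho_G)$, is a fair reading: the paper's own proof also only establishes uniqueness of the \emph{type-preserving} extension, since the arrow $\beta$ is pinned down by the requirement that it continue a nonzero maximal path.
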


\begin{proof} Since the socle of an indecomposable projective $\Lambda(G)$-module is simple,
we have that $\mathrm{dim}_k e_{\sigma i} \Lambda_m e_i =1$. Now for any arrow $\alpha: i
\to j$ of type $t$, $\alpha = e_{j}v_t e_i $. We have that in $\wedge V$
$v_{i_s}\ldots v_{i_1}$ =0 if there are $1 \le h<h'\le s$ such that $i_h =
i_{h'}$. This implies that $0 \neq \alpha_m \ldots \alpha_2\alpha \in e_{\sigma
i}\Lambda_m e_i$ if and only if for $h=2, \ldots, m$, $\alpha_h = e_{j_h} v_{t_h}
e_{i_h}\neq 0$, $i_1=i, j_1=j$ such that $i_{h+1}=j_h$, and $t_1,\ldots,t_m$
are pairwise different. This shows that $\beta\alpha_m \ldots \alpha_2 \neq 0$
implies $\beta = e_l v_{t_1} e_{\sigma i}$. On the other hand, since $0 \neq
\alpha_m \ldots \alpha_2\in \Lambda_{m-1} e_{i_2}$ is not in $\mathrm{soc}
\Lambda_{m-1} e_{i_2}$ and $0\neq V \alpha_m \ldots \alpha_2\in \mathrm{soc}
\Lambda_{m-1} e_{i_2}$, so such $\beta$ exists and we have that $l = \sigma i_2$.
Define $\sigma \alpha = \beta$, this is an extension of $\sigma$, and for any
vertex $i$ of $Q_G$, $\sigma $ induces an bijection from $i^+$ to $(\sigma i)^+
$. So $\sigma$ is extended to an automorphism of the quiver $Q_G$. \end{proof}

A relation in a quiver $Q$ is a subset of elements in $(kQ^2)$, the
ideal of the path algebra $kQ$ generated by paths of length $2$.
According to the definition of \cite{gre}, if $(Q, \rho)$ and $(Q',
\rho')$ are quivers with relations, a regular covering map $\pi: Q
\to Q'$ is called a morphism of quivers with relations if $\pi$
satisfies the following conditions:

\begin{enumerate}
\item $\rho = \{L(x)| L£ºQ' \to Q \mbox{ is a lifting and } x \in \rho' \}$

\item If $x\in \rho$ and $i',j' \in Q'_0$, there exists $i,j \in
Q_0$ such that $\pi(i)=i', \pi(j)=j'$ and $\bar{\pi}(e_j x e_i) =
e_{j'} \bar{\pi}(x )e_{i'}$, here $\bar{\pi}$ is the homomorphism
from $kQ$ to $kQ'$ induced by $\pi$, that is, if $x = \sum_t d_tp_t$
for $d_t \in k$ and $p_t$ paths in $Q$, then $\bar{\pi}(x ) = \sum_t
d_t\pi(p_t)$.
\end{enumerate}

Let $N$ be a normal subgroup of $G$ such that the conditions of
Theorem ~\ref{cover} are satisfied. Let $Q_G$ and $Q_N$ be the McKay
quivers of $G$ and $N$, respectively and let $\pi: Q_G \to Q_N$ be
the covering map. Let $\Lambda(G)$ and $\Lambda(N)$ be their basic algebras,
respectively, then $\Lambda(G) \simeq  k Q_G / (\rho_G)$ and $\Lambda(N)
\simeq k Q_N / (\rho_N)$. Now consider the relationship between
$\rho_N$  and $\rho_G$. We have the following theorem

\begin{thm}\label{coverqwr} Let $G$ and $N$ as in Theorem ~\ref{cover}. The covering
map $\pi :Q_G \to Q_N$ defined in the proof of Theorem ~\ref{cover} is a
morphism of quiver with relations. \end{thm}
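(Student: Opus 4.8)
The plan is to verify the two defining conditions of a morphism of quivers with relations directly from the explicit description of $\pi$ produced in the proof of Theorem~\ref{cover}. Recall from that proof that the vertices of $Q_G$ are pairs $(i,s)$ with $i \in Q_{N,0}$ and $s$ a residue modulo $r$, that $\pi(i,s) = i$, and that an arrow of type $t$ in $Q_N$ from $i$ to $j$ lifts, for each $s$, to an arrow of type $t$ from $(i,s)$ to $(j, s + j_t)$, where $j_t$ is the index with $V\otimes T_{j_t}$ contributing the $t$-th basis direction. The crucial structural fact is that the type of an arrow is preserved by $\pi$ and by any lifting: this is exactly what lets us match the relations, since $\rho_N$ consists of the elements $e_j(v_s v_t + v_t v_s)e_i$ and $\rho_G$ consists of the analogous elements $e_{(j,u)}(v_s v_t + v_t v_s)e_{(i,w)}$ for the lifted vertices.

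First I would spell out the liftings. Fix a vertex $i' \in Q_{N,0}$ and a vertex $(i,s)$ over it; since $\pi$ is a regular covering, there is a unique lifting $L$ of paths starting at $i'$ to paths starting at $(i,s)$, and by the type-preservation just noted, $L$ sends the path $v_t v_s$ (read as composable arrows of types $s$ then $t$ in $Q_N$) to the path of types $s$ then $t$ in $Q_G$ starting at $(i,s)$, which by Lemma~\ref{COMMARR} and the computation in its proof is nonzero in $\wedge V * G$ precisely when the corresponding path in $\wedge V * N$ is nonzero. Applying $L$ to a generator $e_{j'}(v_s v_t + v_t v_s)e_{i'}$ of $\rho_N$ therefore yields $e_{(j,s+j_s+j_t)}(v_s v_t + v_t v_s)e_{(i,s)}$, which is a generator of $\rho_G$; conversely every generator of $\rho_G$ arises this way from some lifting. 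This establishes condition (1): $\rho_G = \{L(x) \mid L \text{ a lifting}, x \in \rho_N\}$.

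For condition (2), take $x = e_{j'}(v_s v_t + v_t v_s)e_{i'} \in \rho_N$ and arbitrary $i',j' \in Q_{N,0}$; I would simply choose $(i,w)$ over $i'$ arbitrarily and let $(j,u)$ be the endpoint of the lift of $v_t v_s$ from $(i,w)$ — note $u = w + j_s + j_t$ is forced and is independent of which of $v_s v_t$, $v_t v_s$ we use, which is why a single pair $(i,w),(j,u)$ works for the whole two-term relation. Then $\bar\pi(e_{(j,u)} x' e_{(i,w)}) = \bar\pi(x') = x = e_{j'} \bar\pi(x') e_{i'}$ by construction, giving the required compatibility. The main obstacle, and the point deserving the most care, is the bookkeeping of the type-indices $j_t$ and the verification that the endpoint shift $u = w + j_s + j_t$ genuinely does not depend on the order of the two arrows — i.e. that $j_s + j_t = j_t + j_s$ in $\mathbb Z/r$, which is immediate but must be invoked — together with checking that $L(v_s v_t + v_t v_s)$ really is a \emph{single} generator of $\rho_G$ rather than a sum over several fibre-vertices; this is guaranteed exactly because both terms of the relation start and end at the same pair of lifted vertices. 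Once this compatibility of the grading-by-type with the covering is isolated, both conditions follow with no further work.
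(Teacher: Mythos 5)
Your proposal is correct and follows essentially the same route as the paper: it uses the explicit description of $\pi$ from the proof of Theorem~\ref{cover}, the fact that both $\rho_N$ and $\rho_G$ are quadratic relations induced from the exterior algebra, and Lemma~\ref{COMMARR} to see that the two monomials $v_sv_t$ and $v_tv_s$ lift to paths with a common endpoint, so a generator of $\rho_N$ lifts to a single generator of $\rho_G$. In fact you make explicit the key bookkeeping point (the endpoint shift $j_s+j_t$ is symmetric in $s,t$) that the paper leaves implicit in its appeal to Lemma~\ref{COMMARR}.
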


\begin{proof} By Theorem ~\ref{cover}, we need only to prove the above conditions
(1) and (2).

Use the notations of the proof of Theorem ~\ref{cover}, if  $V =
\bigoplus_{i=1}^{n'} V_i$, where for $b_1+ \cdots +b_{t-1} < i \le
b_1+ \cdots +b_t$ we have $V_i \simeq S_t$ as irreducible
representation of $N$ and for $b_1+ \cdots +b_{t-1}+b_{t,1}+ \cdots
+b_{t,s-1} < i \le b_1+ \cdots +b_{t,1}+ \cdots +b_{t,s} $, $V_i
\simeq S_t\otimes T_{j_s}$ as irreducible representation of $G$. We
may assume that our basis $\{v_1, \ldots, v_m\}$ is a union of the
bases of $V_1, \ldots, V_{n'}$.

Now there is an arrow $\alpha$ from $i$ to $j$ in the quiver  $Q_N$
of $\Lambda(N)$ provided that we have a basic element $v_r$ in $V_{r'}$
such that  $S_j$ is a summand of $V_{r'} \otimes S_i$, in this case
$\alpha$ is of the form $e_{j}v_r e_i$. As we have done in the proof
of Theorem ~\ref{cover}, we may index the vertices (idempotents of $k G$)
of $Q_G$ as the quiver of $\Lambda(G)$ by the pair $\{(i,j)|i \in
Q_{N,0}, j\in \mathbb Z/|G/N|\mathbb Z \}$. If $i$ is lifted to
$(i,l)$ in $Q_G$, then the arrow $\alpha$ is lifted to an arrow
$\tilde{\alpha}: (i,l) \to (j,l+j')$, for some lift $(j,l+j')$ of
$j$ which is determined by the same element $v_r$ of $V_{r'}$
regarding as representation of $G$. Now both relations $\rho_N$ and
$\rho_G$ are quadratic and are induced by the relations
$\rho=\{v_tv_s+v_sv_t| 1\le t,s \le m\}$.  It follows from
Lemma~\ref{COMMARR} that both conditions (1) and (2) hold. \end{proof}

Obviously, the covering maps in Theorem ~\ref{cmpcover} can be regarded as
morphisms of quiver with relations. Denote  $\mathcal M_m(N)$. the
category of McKay quivers for the groups in $\mathcal G_m(L)$ with
the relations as above together with morphisms of quiver with
relations between them. It follows from Theorem 2.6 of \cite{gre},
there is an unique universal cover $(Q,\rho)$ for $\mathcal M_m(N)$,
it if an (locally finite) infinite quiver and hence is not the an
object in $\mathcal M_m(N)$.

In the setting of Theorem ~\ref{coverqwr}, regard $\Lambda(G)$ and $\Lambda(N)$ as
locally bounded categories with finitely many objects \cite{gr}, the
morphism $\pi$ of quiver with relation induces a covering functor
from $\Lambda(G)$ and $\Lambda(N)$.

\section{Finite Subgroups in $\mathrm{GL}(m,\mathbb C)$ and $\mathrm{SL}(m+1,\mathbb C)$}

Let $V$ be an $m+1-$dimensional vector space over $\mathbb C$ and
let $G$ be a finite subgroup of $\mathrm{GL}(m+1,\mathbb C) = \mathrm{GL}(V)$.
%By constructing the skew group algebra of $G$ over the exterior algebra $\wedge V$, we defined an automorphism of $Q_G$, the Nakayama translation $\sigma = \sigma_{G}$ in \cite{gum}.
Let $\mathrm{det\,}$ denote the one-dimensional representation $\mathbb C$
defined by $g\cdot x =\mathrm{det\,}(g)x $, here $\mathrm{det\,}(g)$ denote the
determinant of $g$. Then $S \to S \otimes \mathrm{det\,}$ define a bijection
on the set of  irreducible representations, which induces an
automorphism on the McKay quiver. In the case of $m = 2$, this
coincide with the translation defined in \cite{g2}.

Let $V$ be an $m+1-$dimensional vector space over $\mathbb C$ and
let $V'$ be an $m-$dimensional subspace of $V$. Take a basis of $V'$
and extend it to a basis of $V$. In this way we embed $\mathrm{GL}(m,\mathbb
C) = \mathrm{GL}(V')$ into  $\mathrm{SL}(V) = \mathrm{SL}(m+1,\mathbb C)$ as follow. For
each $g \in \mathrm{GL}(V')$
$$f: g \to \left( \arr{cc}{g & 0\\ 0&\mathrm{det\,}(g)^{-1}}\right). $$
The follow Theorem tells us how their McKay quivers are related.

\begin{thm}\label{mpo} Let $G'$ be a finite subgroup of $ \mathrm{GL}(V')$ and let $G =
f(G') \subset \mathrm{SL}(V)$ be its image under the map defined above.
Then the McKay quiver of $G'$ is obtained from that of $G'$ by
adding an arrow from $\sigma i$ to $i$ for each vertex $i$ in
$Q_{G',0}$.
 \end{thm}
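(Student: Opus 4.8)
The plan is to unwind the definition of the McKay quiver directly, using that the map $f$ is injective. As $f$ is a group monomorphism, $G\cong G'$, so we may fix a complete set $S_1,\dots,S_n$ of irreducible $\mathbb C G'$-modules and, via $f$, regard it as a complete set of irreducible $\mathbb C G$-modules as well. In particular $Q_{G,0}$ and $Q_{G',0}$ are the same set of isomorphism classes of simples, so the only thing to compare is the arrow multiplicities, i.e.\ $V'\otimes S_i$ as a $G'$-module against $V\otimes S_i$ as a $G$-module pulled back along $f$.

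The heart of the matter is the decomposition of the defining representation $V$ of $G$ restricted along $f$. By the block-diagonal shape of $f(g)$ exhibited above, the subspace $V'\subset V$ is $G$-stable with $G$ acting on it exactly as $G'$ does, while the line $W$ spanned by the extra basis vector is also $G$-stable, with $f(g)$ acting on $W$ by the scalar $\mathrm{det\,}(g)^{-1}$. Hence, identifying $G$ with $G'$, we get an isomorphism of $G'$-representations $V\cong V'\oplus\mathrm{det}^{-1}$, where $\mathrm{det}$ is the one-dimensional determinant representation of $G'\subset\mathrm{GL}(V')$. Consequently, for every $i$,
$$V\otimes S_i\;\cong\;(V'\otimes S_i)\ \oplus\ (\mathrm{det}^{-1}\otimes S_i).$$
The summand $V'\otimes S_i=\bigoplus_j a_{i,j}S_j$ contributes precisely the arrows of $Q_{G'}$ leaving $i$, whereas $\mathrm{det}^{-1}\otimes S_i$ is itself irreducible (a one-dimensional twist of an irreducible is irreducible), say $S_{k(i)}$, and hence contributes exactly one more arrow $i\to k(i)$. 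Choosing the basis of $V$ to be a basis of $V'$ together with the extra vector, this arrow is of the new type, so it is genuinely adjoined to $Q_{G'}$ and not merged into the $a_{i,j}$.

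It remains to identify the target $k(i)$. By \cite{gum} (see the discussion in Section 2), the Nakayama translation $\sigma$ of $\Lambda(G')$ is the determinant twist: $S_{\sigma i}$, the socle of the projective cover of $S_i$, lies in the top degree $m$ of the Koszul selfinjective algebra $\Lambda(G')$ and equals $\wedge^m V'\otimes S_i\cong\mathrm{det}\otimes S_i$. Therefore $\sigma^{-1}i$ is the index of $\mathrm{det}^{-1}\otimes S_i=S_{k(i)}$, i.e.\ $k(i)=\sigma^{-1}i$. Thus $Q_G$ is obtained from $Q_{G'}$ by adjoining, for each vertex $i$, one arrow $i\to\sigma^{-1}i$; re-indexing through the bijection $i\mapsto\sigma i$ of $Q_{G',0}$, this family of new arrows is $\{\sigma j\to j\mid j\in Q_{G',0}\}$, which is exactly the claim.

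There is no substantive obstacle: once the decomposition $V\cong V'\oplus\mathrm{det}^{-1}$ is available, the statement is a direct reading of the definition of the McKay quiver combined with the known description of $\sigma$. The only points requiring care are bookkeeping conventions — fixing the exponent in $W\cong\mathrm{det}^{-1}$ (which is forced by $\mathrm{det\,}(f(g))=1$, the very reason $G\subset\mathrm{SL}(V)$) and matching the direction of the Nakayama translation with that of the determinant twist, so that the adjoined arrow appears as $\sigma i\to i$ rather than as $i\to\sigma i$.
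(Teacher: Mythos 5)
Your proof is correct, but it follows a different route from the paper's. The paper works inside the skew group algebras $\wedge V*G$ and $\wedge V'*G'$: it observes that $\wedge V'*G'\simeq \wedge V/(v_{m+1})*G$, that the longest nonzero paths in $\wedge V'*G'$ have length $m$ and run from each vertex $i$ to $\sigma i$, while in $\wedge V*G$ they have length $m+1$ and return to $i$ (the Nakayama translation of a subgroup of $\mathrm{SL}(V)$ being trivial, by \cite{gum}); since a longest path in $\wedge V*G$ is a longest path of $\wedge V'*G'$ composed with one arrow of the new type $v_{m+1}$, that new arrow must run from $\sigma i$ to $i$. You instead stay at the level of the defining representation: the block form of $f$ gives $V\cong V'\oplus \mathrm{det}^{-1}$ as a $G'$-module, hence $V\otimes S_i\cong (V'\otimes S_i)\oplus(\mathrm{det}^{-1}\otimes S_i)$, the second summand being a single irreducible $S_{\sigma^{-1}i}$ once one knows $S_{\sigma i}\cong \mathrm{det}\otimes S_i$; re-indexing gives the arrows $\sigma j\to j$. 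Both arguments ultimately lean on \cite{gum} — the paper for the longest-path/socle description of $\sigma$ in the skew group algebra, you for the identification of $\sigma$ with the determinant twist (equivalently $\wedge^m V'\cong\mathrm{det}$ in the socle of each projective) — and the orientation convention you flag at the end is exactly the point where the two inputs meet. Your version is more elementary and reads the result directly off the definition of the McKay quiver; the paper's version additionally exhibits the new arrows as the arrows of type $v_{m+1}$, which is the form used in the examples and in Theorem~\ref{coverqwr}'s framework of quivers with relations.
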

\begin{proof} Let $v_1,\ldots,v_{m}$ be a basis of $V'$ and $v_1, \ldots,
v_{m+1}$ be a basis of $V$. Consider the exterior algebras $\wedge V'$
and $\wedge V$. Then $\wedge V' \simeq \wedge V/(v_{m+1})$ naturally. Now consider
the skew group algebra $\wedge V*G$. Since the subspace $V'$ and subspace
$\langle v_{m+1}\rangle$  spanned by $v_{m+1}$ of $V$ are both
invariant under $G$. Consider now the ideal $(v_{m+1})$  generated
by $v_{m+1}$, so we have that
$$ \wedge V/(v_{m+1})*G = \wedge V/(v_{m+1}) *G   \simeq \wedge V'*G'
\subset  \wedge V/(v_{m+1})*G.$$ Since $G \simeq G'$, their McKay quiver
has the same number of vertices. In $\wedge V'*G'$, we have that the
image of $v_{m+1}$ is zero and the longest paths in it are formed by
$m$ arrows of different type going from each vertex to its Nakayama
translation, while in $\wedge V*G$  the longest paths in it are formed by
$m+1$ arrows of different type going from each vertex to itself by
\cite{gum}, these paths are formed by exactly the $m$ arrows of
different in $\wedge V'*G'$ adding one arrows of new type $v_{m+1}$. So
we see that for each vertex $i$, the new arrow in $Q_{G}$ is of type
$v_{m+1}$  going from the Nakayama translation $\sigma i$ of the
vertex back to $i$ itself. This proves our Theorem. \end{proof}

In this case, we say that the McKay quiver of $G$ is obtained from that of $G'$
by {\em replacing Nakayama translation with arrows.}

{\bf Remark.} Adding an new arrow in certain quiver to get a new one in a
similar way appears in the research on higher dimensional Auslander algebra and
the cluster algebras recently, see \cite{gls, iy}.

We have characterized the McKay quiver for finite abelian subgroup $G$ of
$\mathrm{SL}(m,\mathbb C)$ in \cite{g08}. Since the irreducible representations of an
abelian group are all one-dimensional, we can assume that all the elements in
the group are diagonal. So we can regard it as an image of a subgroup of
$\mathrm{GL}(m-1,\mathbb C)$, this shows the following Proposition.

\begin{prop}\label{abq} Let $G$ be an  abelian subgroup $G$ of $\mathrm{SL}(m,\mathbb C)$, then
there is a subgroup $G'$ of $\mathrm{GL}(m-1,\mathbb C)$, such that the McKay quiver of
$G$ is obtained from that of $G'$ by replacing Nakayama translation with
arrows. \end{prop}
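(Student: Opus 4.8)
The plan is to reduce Proposition~\ref{abq} to Theorem~\ref{mpo} by exhibiting, for a given finite abelian subgroup $G$ of $\mathrm{SL}(m,\mathbb C)$, an explicit subgroup $G'$ of $\mathrm{GL}(m-1,\mathbb C)$ with $G = f(G')$, where $f$ is the embedding $\mathrm{GL}(V') \hookrightarrow \mathrm{SL}(V)$ defined just before Theorem~\ref{mpo}. First I would use the fact that a finite abelian subgroup of $\mathrm{GL}(m,\mathbb C)$ is simultaneously diagonalizable: since all irreducible representations of an abelian group are one-dimensional and $V$ decomposes as a direct sum of such, we may choose a basis $v_1,\ldots,v_m$ of $\mathbb C^m$ in which every element of $G$ acts as a diagonal matrix $\mathrm{diag}(\chi_1(g),\ldots,\chi_m(g))$ for characters $\chi_t\colon G\to \mathbb C^*$. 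The condition $G\subseteq \mathrm{SL}(m,\mathbb C)$ says precisely that $\prod_{t=1}^m \chi_t(g) = 1$ for all $g$, i.e. $\chi_m = (\chi_1\cdots\chi_{m-1})^{-1}$.

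Next I would define $G'$ to be the image of $G$ under the projection onto the first $m-1$ diagonal coordinates: send $g = \mathrm{diag}(\chi_1(g),\ldots,\chi_m(g))$ to $g' = \mathrm{diag}(\chi_1(g),\ldots,\chi_{m-1}(g)) \in \mathrm{GL}(m-1,\mathbb C)$. I claim this map $g\mapsto g'$ is an isomorphism $G \cong G'$: it is clearly a surjective homomorphism, and it is injective because $\chi_m(g)$ is determined by $\chi_1(g),\ldots,\chi_{m-1}(g)$ via the $\mathrm{SL}$ relation, so $g'$ determines $g$. Then $\mathrm{det}(g') = \prod_{t=1}^{m-1}\chi_t(g) = \chi_m(g)^{-1}$, hence $\mathrm{det}(g')^{-1} = \chi_m(g)$, which is exactly the last diagonal entry of $g$. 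Therefore $f(g') = \mathrm{diag}(\chi_1(g),\ldots,\chi_{m-1}(g),\mathrm{det}(g')^{-1}) = g$, so $f(G') = G$ under the embedding from Theorem~\ref{mpo} (with $V' = \langle v_1,\ldots,v_{m-1}\rangle \subset V = \mathbb C^m$ and the chosen basis).

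Having arranged $G = f(G')$ with $G'\subseteq \mathrm{GL}(m-1,\mathbb C)$, I would simply invoke Theorem~\ref{mpo}: the McKay quiver of $G$ is obtained from that of $G'$ by adding, for each vertex $i$, an arrow from $\sigma i$ to $i$, where $\sigma$ is the Nakayama translation of the basic algebra $\Lambda(G')$ of $G'$. By the terminology introduced right after Theorem~\ref{mpo}, this is exactly the statement that the McKay quiver of $G$ is obtained from that of $G'$ by replacing Nakayama translation with arrows, which completes the proof.

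The only genuine content beyond citing Theorem~\ref{mpo} is the verification that $g\mapsto g'$ is a group isomorphism and that $f\circ(\text{this map})$ is the identity on $G$; both are immediate from the $\mathrm{SL}$ determinant constraint, so there is no real obstacle here — the proposition is essentially a packaging of the abelian diagonalization fact together with Theorem~\ref{mpo}. The one point to state carefully is the choice of the diagonalizing basis and the identification of $V'$ with its span, so that the embedding $f$ used matches the one in Theorem~\ref{mpo}; I would mention this explicitly rather than leave it implicit.
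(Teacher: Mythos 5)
Your proposal is correct and follows exactly the route the paper intends: the paper's own (one-sentence) justification is precisely that an abelian subgroup of $\mathrm{SL}(m,\mathbb C)$ can be simultaneously diagonalized and then recognized as $f(G')$ for the truncation $G'\subset \mathrm{GL}(m-1,\mathbb C)$, after which Theorem~\ref{mpo} applies. You have simply written out the determinant verification that the paper leaves implicit, which is a welcome addition rather than a deviation.
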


\section{Examples}
In this section, we show  that certain interesting examples of McKay quivers
are obtained in these two ways. These quivers are very useful in mathematics,
it is interesting to know how  the construction can be used in the study of the
mathematical problems concerning these quivers.

\subsection{McKay quiver of the form double quiver of affine Dynkin diagram  $\tilde{A}_{n-1}$.}
The classical McKay quiver of type $\tilde{A}_{n-1}$ can be got by the theory
developed in this paper. Start with $V_1= \mathbb C$, let $G_1$ be a finite
subgroup of $\mathrm{SL}(1,\mathbb C) = \{1\}$, then $G_1$ is a trivial group, and we
know that its quiver $Q_{G_1}$ is just one loop. \setlength{\unitlength}{1mm}
\begin{center}
\begin{picture}(20,20)
\put(10,7){\circle{20}} \put(10,0){\circle*{2}} \put(13,0.5){\vector(-2,-1){1}}
\end{picture}
\end{center}
 Now we make extension
of $G_1$ with a cyclic group $H$ of order $n+1$, get finite subgroup $G_2
\simeq H$ of $\mathrm{GL}(1,\mathbb C) = \mathbb C^*$, the McKay quiver $Q_{G_2}$ of
$G_2$ is a regular covering of $Q_{G_1}$, in fact it is a cyclic quiver of $n$
vertices.\begin{center}
\begin{picture}(50,42)
\put(14,2){\circle*{2}} \put(34,2){\circle*{2}} \put(0,16){\circle*{2}}
\put(48,16){\circle*{2}} \put(14,30){\circle*{2}} \put(34,30){\circle*{2}}
\put(20,2){\circle*{1}} \put(24,2){\circle*{1}} \put(28,2){\circle*{1}}
%\put(7,2){\circle*{2}} \put(17,2){\circle*{2}} \put(0,9){\circle*{2}}\put(24,9){\circle*{2}} \put(7,16){\circle*{2}} \put(17,16){\circle*{2}}
\put(15,30){\vector(1,0){17}} \put(35,29){\vector(1,-1){12}}
\put(47,15){\vector(-1,-1){12}} \put(13,3){\vector(-1,1){12}}
 \put(1,17){\vector(1,1){12}}
\end{picture}
\end{center} Now construct the skew group algebra $\wedge V*G$ with $V=V_1$, its basic
algebra $\Lambda(G_1)$ is a selfinjective algebra with vanishing radical square, so
the Nakayama translation for vertex $i$ is just the tail of the arrow starting
at $i$. Embed $V_1$ in $V_2= \mathbb C^2$ in the way as above, we get the
subgroup $G_3$ of $\mathrm{SL}(2,\mathbb C)$ isomorphic to $G_2$, whose McKay quiver
is exactly the double quiver of the affine Dynkin diagram of type
$\tilde{A}_{n-1}$.\begin{center}
\begin{picture}(50,42)
\put(14,2){\circle*{2}} \put(34,2){\circle*{2}} \put(0,16){\circle*{2}}
\put(48,16){\circle*{2}} \put(14,30){\circle*{2}} \put(34,30){\circle*{2}}
\put(20,2){\circle*{1}} \put(24,2){\circle*{1}} \put(28,2){\circle*{1}}
%\put(7,2){\circle*{2}} \put(17,2){\circle*{2}} \put(0,9){\circle*{2}}\put(24,9){\circle*{2}} \put(7,16){\circle*{2}} \put(17,16){\circle*{2}}
 \put(15,31){\vector(1,0){17}} \put(36,29){\vector(1,-1){12}}
 \put(49,15){\vector(-1,-1){12}} \put(13,3){\vector(-1,1){12}}
 \put(1,17){\vector(1,1){12}}
 \put(33,29){\vector(-1,0){17}} \put(46,17){\vector(-1,1){12}}
 \put(35,3){\vector(1,1){12}} \put(3,15){\vector(1,-1){12}}
 \put(15,29){\vector(-1,-1){12}}
\end{picture}
\end{center}

Since all the finite subgroup of $ \mathbb C^*$ are abelian, we see that  the
double quiver of  affine Dynkin diagram of type $\tilde{A}_{n-1}$ are the only
ones obtained from that of subgroups of $\mathrm{GL}(1,\mathbb C) = \mathbb C^*$ by
replacing Nakayama translation with arrows. So we get the following
proposition.

\begin{prop}\label{affdk} The double quiver of an affine Dynkin diagram is  obtained from
the McKay quiver of subgroups of $ \mathbb C^*$ by replacing Nakayama
translation with arrows if and only if it is of type $\tilde{A}_n$.\end{prop}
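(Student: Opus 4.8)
The plan is to prove both directions of the biconditional in Proposition~\ref{affdk}, building on the earlier results Theorem~\ref{mpo}, Proposition~\ref{abq}, and the explicit $\tilde{A}_{n-1}$ construction worked out just above the statement.

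\textbf{The ``if'' direction.} Suppose the target quiver is the double quiver of $\tilde{A}_n$. The example immediately preceding the statement already exhibits this explicitly: starting from the trivial subgroup $G_1\subset\mathrm{SL}(1,\mathbb C)$, extending by a cyclic group $H$ of order $n+1$ to get $G_2\simeq H\subset\mathrm{GL}(1,\mathbb C)=\mathbb C^*$ (whose McKay quiver is the oriented $n$-cycle by Theorem~\ref{cover}), and then embedding $G_2$ into $\mathrm{SL}(2,\mathbb C)$ via the map $f$ of Section~3, Theorem~\ref{mpo} tells us the resulting McKay quiver is obtained from the $n$-cycle by replacing the Nakayama translation with arrows. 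Since $\Lambda(G_2)$ has vanishing radical square, the Nakayama translation on vertex $i$ is the head of the unique arrow out of $i$, so ``replacing Nakayama translation with arrows'' adds the reverse of each arrow in the cycle, producing exactly the double quiver of $\tilde{A}_{n-1}$. So for this direction I would simply invoke that computation: every double quiver of $\tilde{A}_{n-1}$ arises this way, with $G'=G_2$ a cyclic subgroup of $\mathbb C^*=\mathrm{GL}(1,\mathbb C)$.

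\textbf{The ``only if'' direction.} Suppose the double quiver of some affine Dynkin diagram $\Delta$ is obtained from the McKay quiver $Q_{G'}$ of some finite subgroup $G'\subset\mathbb C^*=\mathrm{GL}(1,\mathbb C)$ by replacing Nakayama translation with arrows. Since $\mathbb C^*$ is abelian, $G'$ is a finite cyclic group, all of whose irreducible representations are one-dimensional, hence $|Q_{G',0}|=|G'|$ and the defining representation $V'=\mathbb C$ contributes exactly one arrow out of each vertex; thus $Q_{G'}$ is a disjoint union of oriented cycles, and in fact (the defining character being a generator) a single oriented $n$-cycle where $n=|G'|$. Replacing Nakayama translation with arrows then adds exactly one new arrow into each vertex (from $\sigma i$ to $i$), so the resulting quiver has exactly two arrows at each vertex in some orientation — it is the double of a cycle, i.e.\ the double quiver of $\tilde{A}_{n-1}$. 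The key point to argue carefully is that among double quivers of affine Dynkin diagrams, only type $\tilde{A}$ has every vertex of valency $2$: types $\tilde{D}$ and $\tilde{E}$ (and $\tilde{A}_0$/$\tilde{A}_1$ aside, which still fit the $\tilde{A}$ family) have a branch vertex of valency $\ge 3$. Hence $\Delta$ must be of type $\tilde{A}_n$.

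\textbf{Main obstacle.} The genuine content is the ``only if'' direction, and within it the main thing to pin down is the structure of $Q_{G'}$ for $G'\subset\mathbb C^*$: one must observe that a one-dimensional defining representation forces exactly one arrow leaving each vertex, so $Q_{G'}$ is a union of cycles; then one must check that ``replacing Nakayama translation with arrows'' genuinely yields the \emph{double} of that cycle rather than something with loops or multiple edges (here one uses that $\sigma i\ne i$ unless $n=1$, and that the Nakayama translation is, as noted in the example, read off from the radical-square-zero structure of $\Lambda(G')$). The remaining step — that among affine Dynkin types only $\tilde{A}$ is a cycle — is a standard and immediate inspection of the Dynkin classification, so I would state it without belaboring it.
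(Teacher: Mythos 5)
Your proof is correct and follows essentially the same route as the paper, which derives the ``if'' direction from the explicit $G_1\subset G_2\subset\mathrm{SL}(2,\mathbb C)$ computation preceding the statement and disposes of the ``only if'' direction with the one-line observation that finite subgroups of $\mathbb C^*$ are cyclic, so their McKay quivers are oriented cycles and replacing the Nakayama translation with arrows can only produce a doubled cycle. You merely make explicit what the paper leaves implicit (the valency argument ruling out types $\tilde D$ and $\tilde E$, and the identification $\sigma i$ = head of the arrow out of $i$, correcting the paper's ``tail'' slip), and the off-by-one between $\tilde A_{n-1}$ and $\tilde A_n$ you inherit from the paper itself.
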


\subsection{From double quiver of type $\tilde{A}_1$ to   $3$-McKay quivers with $4$ vertices}

Start with the order $2$ cyclic subgroup generated by the image of
$\matr{cc}{-1&0\\0&-1}$ in $\mathrm{SL}(2,\mathbb C)$, its McKay quiver is the  double
quiver of type $\tilde{A}_1$

\medskip
\setlength{\unitlength}{1mm}

Extend with order $2$ subgroups in $\mathrm{GL}(2,\mathbb C)/\mathrm{SL}(2,\mathbb
C)$ generated by the image of $\matr{cc}{i&0\\0&i}$ and
$\matr{cc}{1&0\\0&-1}$, respectively, we get two order $4$ subgroups
in $\mathrm{GL}(2,\mathbb C)$ with the following McKay quivers,
respectively.

\medskip
\setlength{\unitlength}{1mm}

\begin{center}
\begin{picture}(80,25)
\put(5,2){\circle*{1}} \put(5,22){\circle*{1}}
\put(25,2){\circle*{1}} \put(25,22){\circle*{1}}
\put(45,2){\circle*{1}} \put(45,22){\circle*{1}}
\put(65,2){\circle*{1}} \put(65,22){\circle*{1}}

\put(4,3){\vector(0,1){18}} \put(5,3){\vector(0,1){18}}
\put(6,22){\vector(1,0){18}} \put(6,23){\vector(1,0){18}}
\put(25,21){\vector(0,-1){18}} \put(26,21){\vector(0,-1){18}}
\put(24,1){\vector(-1,0){18}} \put(24,2){\vector(-1,0){18}}

\put(44,3){\vector(0,1){18}} \put(45,21){\vector(0,-1){18}}
\put(46,22){\vector(1,0){18}} \put(64,23){\vector(-1,0){18}}
\put(65,21){\vector(0,-1){18}} \put(66,3){\vector(0,1){18}}
\put(64,1){\vector(-1,0){18}} \put(46,2){\vector(1,0){18}}

\end{picture}
\end{center}

Note the second one is exactly the double $\tilde{A}_3$ quiver, which is the
same as the McKay quiver of order $4$ cyclic subgroup of $\mathrm{SL}(2,\mathbb C)$.
The difference between the two lies in  their Nakayama translations. While
Nakayama translation for a subgroup of $\mathrm{SL}(2,\mathbb C)$ is identity, it
sends each vertex to the opposite one in our second McKay quiver. This also
shows that the Nakayama translation should be essential ingredient when
consider McKay quiver for finite subgroups of $\mathrm{GL}(m,\mathbb C)$.

Now embedding these subgroups in $\mathrm{SL}(3,\mathbb C)$, using the map $f$ defined
above, we add an arrow for each vertex from its Nakayama translation back to
itself and get respectively the following McKay quivers for them. They are just
the only 3-McKay quivers with $4$ vertex for  finite subgroups of
$\mathrm{GL}(m,\mathbb C)$ given in \cite{g08}.
\begin{center}
\begin{picture}(10,25)
\put(5,2){\circle*{1}} \put(5,22){\circle*{1}}

\put(3,3){\vector(0,1){18}} \put(4,3){\vector(0,1){18}}
\put(6,21){\vector(0,-1){18}} \put(7,21){\vector(0,-1){18}}

\end{picture}
\end{center}
\medskip

\setlength{\unitlength}{1mm}

\begin{center}
\begin{picture}(80,25)
\put(5,2){\circle*{1}} \put(5,22){\circle*{1}}
\put(25,2){\circle*{1}} \put(25,22){\circle*{1}}
\put(45,2){\circle*{1}} \put(45,22){\circle*{1}}
\put(65,2){\circle*{1}} \put(65,22){\circle*{1}}

\put(4,3){\vector(0,1){18}} \put(5,3){\vector(0,1){18}}
\put(6,22){\vector(1,0){18}} \put(6,23){\vector(1,0){18}}
\put(25,21){\vector(0,-1){18}} \put(26,21){\vector(0,-1){18}}
\put(24,1){\vector(-1,0){18}} \put(24,2){\vector(-1,0){18}}

\put(7,3){\vector(1,1){17}} \put(23,21){\vector(-1,-1){17}}
\put(6,20){\vector(1,-1){17}} \put(24,4){\vector(-1,1){17}}

\put(44,3){\vector(0,1){18}} \put(45,21){\vector(0,-1){18}}
\put(46,22){\vector(1,0){18}} \put(64,23){\vector(-1,0){18}}
\put(65,21){\vector(0,-1){18}} \put(66,3){\vector(0,1){18}}
\put(64,1){\vector(-1,0){18}} \put(46,2){\vector(1,0){18}}

\put(47,3){\vector(1,1){17}} \put(63,21){\vector(-1,-1){17}}
\put(46,20){\vector(1,-1){17}} \put(64,4){\vector(-1,1){17}}

\end{picture}
\end{center}

\subsection{An example of a quiver in the study of D-branes.}
In their work \cite{gj}, Govindarajan and Jayaraman use two McKay quivers and
Beilinson quivers to describe the D-branes at the orbifold point. Here we show
how one of their quivers are constructed from some lower dimensional ones,
using the theory of this paper, the other one also has a similar construction.
The McKay quiver for $\mathbb P^{1,1,1,1,2}$ in \cite{gj} can be obtain as
follow. Start with the trivial subgroup of $\mathrm{SL}(4,\mathbb C)$, one get McKay
quiver with one vertex and four loops,
\begin{center}
\begin{picture}(20,20) \put(3,5){\circle*{2}}
\put(10,7){\circle{20}}\put(10,6){\circle{18}}\put(10,5){\circle{16}}\put(10,4){\circle{14}}
\put(13,0.5){\vector(-2,-1){1}} \put(13,-1.5){\vector(-2,-1){1}}
\put(13,-0.5){\vector(-2,-1){1}} \put(13,-2.5){\vector(-2,-1){1}}
\end{picture}
\end{center}
extending it with the
subgroup of $\mathrm{GL}(4,\mathbb C)$ generated by the scalar matrix with scalar
$\xi_6$, a $6$th primitive root of the unit. One gets the McKay quiver with
oriented cyclic quiver with $6$ vertices, indexed by $\mathbb Z_6$, with $4$
arrows from $i$ to $i+1$ for each $i \in\mathbb Z_6$.
\begin{center}
\begin{picture}(50,35)
\put(14,2){\circle*{2}} \put(34,2){\circle*{2}} \put(0,16){\circle*{2}}
\put(48,16){\circle*{2}} \put(14,30){\circle*{2}} \put(34,30){\circle*{2}}
%\put(20,2){\circle*{1}} \put(24,2){\circle*{1}} \put(28,2){\circle*{1}}
%\put(7,2){\circle*{2}} \put(17,2){\circle*{2}} \put(0,9){\circle*{2}}\put(24,9){\circle*{2}} \put(7,16){\circle*{2}} \put(17,16){\circle*{2}}
 \put(15,32){\vector(1,0){17}}  \put(15,31){\vector(1,0){17}}  \put(15,30){\vector(1,0){17}}  \put(15,29){\vector(1,0){17}}
 \put(34,29){\vector(1,-1){12}} \put(35,29){\vector(1,-1){12}} \put(36,29){\vector(1,-1){12}} \put(37,29){\vector(1,-1){12}}
 \put(50,15){\vector(-1,-1){12}} \put(48,15){\vector(-1,-1){12}} \put(47,15){\vector(-1,-1){12}} \put(46,15){\vector(-1,-1){12}}
 \put(12,3){\vector(-1,1){12}} \put(13,3){\vector(-1,1){12}} \put(14,3){\vector(-1,1){12}} \put(15,3){\vector(-1,1){12}}
 \put(0,17){\vector(1,1){12}}  \put(1,17){\vector(1,1){12}}\put(2,17){\vector(1,1){12}} \put(3,17){\vector(1,1){12}}
 \put(33,0){\vector(-1,0){17}}  \put(33,1){\vector(-1,0){17}}  \put(33,2){\vector(-1,0){17}}  \put(33,3){\vector(-1,0){17}}
\end{picture}
\end{center}In this case, the
Nakayama translation sending $i$ to $i+4$, embedding in $\sl(4,\mathbb C)$, the
new McKay quiver adds for each $i$ an arrow from $i+4$ to $i$, this is exactly
the McKay quiver for $\mathbb P^{1,1,1,1,2}$ in \cite{gj}.
\begin{center}
\begin{picture}(50,35)
\put(14,2){\circle*{2}}  \put(0,16){\circle*{2}}  \put(14,30){\circle*{2}}
\put(30,30){\circle*{2}} \put(44,16){\circle*{2}} \put(30,2){\circle*{2}}
%\put(20,2){\circle*{1}} \put(24,2){\circle*{1}} \put(28,2){\circle*{1}}
%\put(7,2){\circle*{2}} \put(17,2){\circle*{2}} \put(0,9){\circle*{2}}\put(24,9){\circle*{2}} \put(7,16){\circle*{2}} \put(17,16){\circle*{2}}
 \put(15,32){\vector(1,0){13}}  \put(15,31){\vector(1,0){13}}  \put(15,30){\vector(1,0){13}}  \put(15,29){\vector(1,0){13}}
 \put(30,29){\vector(1,-1){12}} \put(31,29){\vector(1,-1){12}} \put(32,29){\vector(1,-1){12}} \put(33,29){\vector(1,-1){12}}
 \put(46,15){\vector(-1,-1){12}} \put(43,15){\vector(-1,-1){12}} \put(44,15){\vector(-1,-1){12}} \put(45,15){\vector(-1,-1){12}}
 \put(12,3){\vector(-1,1){12}} \put(13,3){\vector(-1,1){12}} \put(14,3){\vector(-1,1){12}} \put(15,3){\vector(-1,1){12}}
 \put(0,17){\vector(1,1){12}}  \put(1,17){\vector(1,1){12}}\put(2,17){\vector(1,1){12}} \put(3,17){\vector(1,1){12}}
 \put(29,0){\vector(-1,0){13}}  \put(29,1){\vector(-1,0){13}}  \put(29,2){\vector(-1,0){13}}  \put(29,3){\vector(-1,0){13}}

 \put(16,28){\vector(2,-1){25}}
 \put(30,27){\vector(0,-1){22}}
 \put(41,16){\vector(-2,-1){25}}
 \put(14,5){\vector(0,1){22}}
 \put(3,16){\vector(2,1){25}}
 \put(29,4){\vector(-2,1){25}}
\end{picture}
\end{center}

\medskip
{}
\end{document}